\newtheorem{prop}{Proposition}
\newtheorem{defn}{Definition}
\newtheorem{propert}{Properties}
\newtheorem{cor}{Corollary}
\newtheorem{exmp}{Example}
\newtheorem{obs}{Observation}
\newtheorem{rem}{Remark}
\begin{document}
	\begin{frontmatter}
		\title{Umbral-Algebraic Methods and Asymptotic Properties of Special Polynomials}

\author[Enea]{G. Dattoli}
\ead{giuseppe.dattoli@enea.it}

\author[Enea]{S. Licciardi\corref{cor}}
\ead{silvia.licciardi@enea.it}

\author[Unict]{R.M. Pidatella}
\ead{rosa@dmi.unict.it}

\author[Enea]{E. Sabia}
\ead{elio.sabia@enea.it}

\address[Enea]{ENEA - Frascati Research Center, Via Enrico Fermi 45, 00044, Frascati, Rome, Italy}
\cortext[cor]{Corresponding author: silviakant@gmail.com, silvia.licciardi@enea.it, orcid 0000-0003-4564-8866, tel. nr: +39 06 94005421. }
\address[Unict]{Dep. of Mathematics and Computer Science, University of Catania, Viale A. Doria 6, 95125, Catania, Italy}

\begin{abstract}
%\corres{Correspondence: silviakant@gmail.com, silvia.licciardi@enea.it; Tel.: +39-392-509-6741}
%\secondnote{The authors contributed to this work according to what declared in the Author Contribution Section.}
A new method of algebraic nature is proposed for the study of the asymptotic properties of special polynomials. The technique we foresee is based on the use of umbral operators, allowing a unified treatment of a large body of polynomial families, with the use of elementary algebraic tools.
\end{abstract}

\begin{keyword}
{operators theory 44A99, 47B99, 47A62; umbral methods 05A40; special functions 33C52, 33C65, 33C99, 33B10, 33B15; Hermite polynomials 33C45; integral transforms 35A22, 44A20.}\end{keyword}

\end{frontmatter}

\section{Introduction}	
	
%	\begin{frontmatter}
%		
%		
%		\title{Umbral-Algebraic methods and asymptotic properties of special polynomials}
%				
%		\author[Enea]{G. Dattoli}
%		\ead{giuseppe.dattoli@enea.it}
%		
%		\author[Enea]{S. Licciardi \corref{cor}}
%		\ead{silviakant@gmail.com}
%		
%		\author[Unict]{R.M Pidatella}
%		\ead{rosa@dmi.unict.it}
%		
%		\author[Enea]{E. Sabia}
%		\ead{elio.sabia@enea.it}
%		
%		\address[Enea]{ENEA - Frascati Research Center, Via Enrico Fermi 45, 00044, Frascati, Rome, Italy}
%		\cortext[cor]{Corresponding author}
%		\address[Unict]{Dep. of Mathematics and Computer Science, University of Catania, Viale A. Doria 6, 95125, Catania, Italy}
%		
%		\begin{abstract}
%		A new method of algebraic nature is proposed for the study of the	asymptotic properties of special polynomials. The technique we foresee is based on the combined use of umbral and operational methods.
%		The technique we propose reduces the derivation of old and new asymptotic identities  to the straightforward application of elementary calculus rules. 
%		\end{abstract}
%		
%		
%		\begin{keyword}
%			Umbral Calculus, Operator Theory, Special Polynomials, Bessel Functions, Asymptotic Identities.
%		\end{keyword}
%		
%	\end{frontmatter}
%	
%	\section{Introduction}

The Laguerre polynomials in umbral form are defined as \cite{Babusci,BDGPe,lacunary}

\begin{equation}\label{eq:Lag pol umbral}
L_{n}(x,y)=\left(y-\hat{c}x\right)^{n}\varphi_{0}\;,
\end{equation}
where we have introduced an umbral operator $\hat{c}$ and a vacuum
$\varphi_{0}$ \cite{S.Roman,SLicciardi,On Ramanujan}. Albeit they can be defined on abstract grounds, we
can provide the relevant realization in terms of a differential operator and special functions, as specified below.

\begin{defn}
	Let $\varphi(z)$ an appropriate chosen function
	 \begin{equation}\label{key}
	 \varphi\left(z\right):=\varphi_{z}=\frac{1}{\Gamma\left(z+1\right)}.
	 \end{equation}
	We introduce the Umbral operator $\hat{c}$ as a shift operator 
	\begin{equation}\label{key}
	\hat{c} =e^{\partial_{z}}
	\end{equation}
	acting on the space of the functions, called vacuum $\varphi_{0}$, according to the prescription \cite{SLicciardi,L.C.Andrews}
%	\begin{equation}\label{key}
%	\varphi\left(z\right):=\varphi_{z}=\frac{1}{\Gamma\left(z+1\right)}
%	\end{equation}
	in the following way
\begin{equation}
\hat{c}^{\,\mu}\varphi_{0}=e^{\,\mu\partial_{z}}\frac{1}{\Gamma\left(z+1\right)}\mid_{z=0}=\frac{1}{\Gamma\left(\mu+1\right)}.\label{eq:operator c}
\end{equation}
%\begin{equation}\label{eq:first rel}
%\begin{split}
% \hat{c}& =e^{\partial_{z}}\;,\\
% \varphi\left(z\right)&:=\varphi_{z}=\frac{1}{\Gamma\left(z+1\right)}\;.
%\end{split}
%\end{equation}
%What we have loosely called vacuum is the space of the functions on which the $\hat{c}$ operators acts, 
\end{defn}
%\begin{equation}
%\hat{c}^{\,\mu}\varphi_{0}=e^{\,\mu\partial_{z}}\frac{1}{\Gamma\left(z+1\right)}\mid_{z=0}=\frac{1}{\Gamma\left(\mu+1\right)}.\label{eq:operator c}
%\end{equation}
\noindent It is furthermore evident that the umbral operator $\hat{c}$ satisfies the following properties.

\begin{propert}\label{eq:oper prop}
	$\forall \mu,\nu\in\mathbb{R}$
	
\begin{align}
\hat{c}^{\,\mu}\hat{c}^{\,\nu}=\hat{c}^{\,\mu+\nu}\;,\\
\left(\hat{c}^{\,\mu}\right)^{r}=\hat{c}^{\,r\mu}\;.
\end{align}
\end{propert}

\begin{obs}
By expanding the Newton binomial in eq. \eqref{eq:Lag pol umbral}
and in view of the summarized rules, we find

\begin{equation}\label{eq:Newton expans}
L_{n}\left(x,y\right)=\sum_{s=0}^n\binom{n}{s}(-1)^s y^{n-s}\;\hat{c}^s\;x^s\;\varphi_{0}=
\sum_{s=0}^n\dfrac{1}{s!}\binom{n}{s}(-1)^s y^{n-s}x^s
\end{equation}
which yields the usual definition \cite{L.C.Andrews}.
\end{obs}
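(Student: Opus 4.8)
The plan is to verify the Observation by a direct application of the binomial theorem to the umbral definition \eqref{eq:Lag pol umbral}, treating $\hat{c}$ formally and deferring its action on the vacuum $\varphi_{0}$ until the final step. The essential point is that the symbols $x$ and $y$ are ordinary (commuting) variables, while $\hat{c}$ is an operator that commutes with them, so the expression $\left(y-\hat{c}x\right)^{n}$ may be expanded exactly as an ordinary binomial power.

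First I would write
\begin{equation}
\left(y-\hat{c}x\right)^{n}=\sum_{s=0}^{n}\binom{n}{s}y^{n-s}\left(-\hat{c}x\right)^{s}=\sum_{s=0}^{n}\binom{n}{s}(-1)^{s}y^{n-s}x^{s}\left(\hat{c}\right)^{s},
\end{equation}
where in the last step I have pulled the scalars through the operator and collected all powers of $\hat{c}$. Then I would invoke Properties \ref{eq:oper prop}, specifically $\left(\hat{c}^{\,\mu}\right)^{r}=\hat{c}^{\,r\mu}$ with $\mu=1$ and $r=s$, to rewrite $\left(\hat{c}\right)^{s}=\hat{c}^{\,s}$. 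Applying the resulting operator to $\varphi_{0}$ and using the defining prescription \eqref{eq:operator c} with $\mu=s$ gives $\hat{c}^{\,s}\varphi_{0}=\dfrac{1}{\Gamma(s+1)}=\dfrac{1}{s!}$, the last equality holding because $s$ is a nonnegative integer.

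Substituting back yields
\begin{equation}
L_{n}(x,y)=\sum_{s=0}^{n}\binom{n}{s}(-1)^{s}y^{n-s}x^{s}\,\hat{c}^{\,s}\varphi_{0}=\sum_{s=0}^{n}\frac{1}{s!}\binom{n}{s}(-1)^{s}y^{n-s}x^{s},
\end{equation}
which is precisely \eqref{eq:Newton expans}; recognizing the right-hand side as the classical series representation of the (two-variable) Laguerre polynomials \cite{L.C.Andrews} completes the argument. There is no serious obstacle here: the only point requiring care is the justification that $\hat{c}$ may be treated as a scalar-like symbol during the binomial expansion and that its realization \eqref{eq:operator c} is to be applied only once, at the end, to the single accumulated power $\hat{c}^{\,s}$ — this is exactly the content of the umbral formalism set up in the Definition and Properties above, so no additional machinery is needed.
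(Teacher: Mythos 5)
Your argument is correct and coincides with the paper's own reasoning: the Observation is obtained exactly by expanding the Newton binomial with $\hat{c}$ treated as a commuting symbol, then applying $\hat{c}^{\,s}\varphi_{0}=\frac{1}{\Gamma(s+1)}=\frac{1}{s!}$ from Eq. \eqref{eq:operator c} at the end. Nothing is missing and no different route is taken.
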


The Umbral image of Bessel functions has been shown to be a Gaussian, namely \cite{SLicciardi}-\cite{D.Babusci}

\begin{equation}\label{eq:umbral Bessel}
J_{n}\left(x\right)=\left(\hat{c}\frac{x}{2}\right)^{n}e^{-\hat{c}\left(\frac{x}{2}\right)^{2}}\varphi_{0}=\sum_{r=0}^{\infty}\dfrac{\left(-1\right)^{r}\left(\frac{x}{2}\right)^{n+2r}}{r!\left(n+r\right)!}\;.
\end{equation}
The previous restyling has been profitably exploited to study most of the properties of Laguerre polynomials and of special functions as well, by using very straightforward algebraic means. We will hereafter show that the same formalism allows the derivation of the asymptotic properties of Laguerre families using the same elementary tools.

\begin{exmp}
 To this aim we note that, according to eq. (\ref{eq:Lag pol umbral}),
the following identity holds

\begin{equation}\label{eq:Laguerre approx Bessel}
\left.  L_{n}\left(\frac{x}{n},y\right)\right|  _{n\gg 1}=\left. y^{n}\left(1-\hat{c}\frac{x}{yn}\right)^{n}\right| _{n\gg 1}\,\varphi_{0}\simeq y^{n}e^{-\hat{c}\frac{x}{y}}\varphi_{0}=y^{n}J_{0}\left(2\sqrt{\frac{x}{y}}\right)\;,
\end{equation}
which is a well known result \cite{Lebedev}, obtained in a fairly direct
way. 
\end{exmp}

\noindent A better approximation can be obtained according to the Proposition below.

\begin{prop}
	$\forall x,y\in\mathbb{R}$, $\forall n\in\mathbb{N}:n\gg1$
	\begin{equation}\label{eq:Laguerre as Herm based Bess}
	\left. L_{n}\left(\frac{x}{n},y\right)\right| _{n\gg 1}\simeq
%	 y^{n}\sum_{r=0}^{\infty}\frac{\hat{c}^{\,r}}{r!}H_{r}\left(-\frac{x}{y},-\frac{1}{2n}\left(\frac{x}{y}\right)^{2}\right)\varphi_{0}
	y^{n}{}_{H}C_0\left(-\frac{x}{y},-\frac{1}{2n}\left(\frac{x}{y}\right)^{2}\right)
	\end{equation}
where ${}_{H}C_n\left(x,y\right)$ are the Hermite based Bessel function \cite{DattoliComp}
\begin{equation}
{}_{H}C_n\left(x,y\right)=\sum_{r=0}^{\infty}\frac{H_{r}\left(x,y\right)}{r!\left(n+r\right)!}\label{eq:Hermite based Bessel func}\;.
\end{equation}
		\end{prop}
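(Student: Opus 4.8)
The plan is to push the approximation behind eq.~\eqref{eq:Laguerre approx Bessel} one order further, keeping the next term in the asymptotic expansion of the umbral binomial. Starting from the umbral identity $L_{n}(x/n,y)=y^{n}\,(1-\hat c\,\tfrac{x}{yn})^{n}\,\varphi_{0}$, which follows at once from \eqref{eq:Lag pol umbral}, I would rewrite the $n$-th power as an exponential,
\begin{equation}
\left(1-\hat c\,\frac{x}{yn}\right)^{n}=\exp\!\left(n\,\log\!\left(1-\hat c\,\frac{x}{yn}\right)\right),
\end{equation}
which is legitimate because $\hat c$ commutes with itself, so ordinary functional calculus applies. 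Putting $t=\hat c\,x/y$ and using $n\log(1-t/n)=-t-\tfrac{t^{2}}{2n}-\sum_{k\ge 3}\tfrac{t^{k}}{k\,n^{k-1}}$, retention of the two leading terms gives, for $n\gg1$,
\begin{equation}
L_{n}\!\left(\frac{x}{n},y\right)\Big|_{n\gg1}\simeq y^{n}\exp\!\left(-\hat c\,\frac{x}{y}-\frac{1}{2n}\,\hat c^{\,2}\left(\frac{x}{y}\right)^{2}\right)\varphi_{0}.
\end{equation}

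Next I would recognise the surviving exponential as a generating series of the two-variable Hermite polynomials. From $e^{a t+b t^{2}}=\sum_{r\ge0}\tfrac{H_{r}(a,b)}{r!}\,t^{r}$ with $a=-x/y$, $b=-\tfrac{1}{2n}(x/y)^{2}$ and $t=\hat c$, the exponential becomes $\sum_{r\ge0}\tfrac{H_{r}(a,b)}{r!}\,\hat c^{\,r}\varphi_{0}$; applying the vacuum rule $\hat c^{\,r}\varphi_{0}=1/\Gamma(r+1)=1/r!$ turns this into $\sum_{r\ge0}\tfrac{H_{r}(a,b)}{(r!)^{2}}$, which is exactly ${}_{H}C_{0}(a,b)$ by \eqref{eq:Hermite based Bessel func} with $n=0$. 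This yields the asserted formula.

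The step I expect to be the main obstacle is making the truncation precise: one should check that the discarded tail $-\sum_{k\ge3}\tfrac{1}{k\,n^{k-1}}(\hat c\,x/y)^{k}$, once the vacuum has been applied, alters the resulting series only by $O(1/n^{2})$, and that the series defining ${}_{H}C_{0}$ converges for all values of its arguments. The latter follows from the factorial decay $\hat c^{\,r}\varphi_{0}=1/r!$ together with the elementary bound $|H_{r}(a,b)|\le r!\sum_{k}\tfrac{|a|^{r-2k}|b|^{k}}{k!\,(r-2k)!}$, which makes ${}_{H}C_{0}$ an entire function of both variables. As a consistency check, letting $n\to\infty$ sends the second Hermite argument to $0$; since $H_{r}(a,0)=a^{r}$, one recovers ${}_{H}C_{0}(a,0)=\sum_{r}a^{r}/(r!)^{2}=J_{0}(2\sqrt{-a})$ and hence eq.~\eqref{eq:Laguerre approx Bessel}.
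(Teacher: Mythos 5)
Your proposal is correct and follows essentially the same route as the paper: rewrite the umbral binomial as $y^{n}e^{n\ln\left(1-\hat{c}x/(ny)\right)}\varphi_{0}$, truncate the logarithm at second order, and resum via the two-variable Hermite generating function and the vacuum rule $\hat{c}^{\,r}\varphi_{0}=1/r!$ to obtain $y^{n}\,{}_{H}C_{0}$. Your added remarks on the $O(1/n^{2})$ tail, the entirety of ${}_{H}C_{0}$, and the $n\to\infty$ consistency check are sound refinements of the same argument.
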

\begin{proof}
	We note that
\begin{equation}\label{eq:identity n.1}
\left(y-\hat{c}\frac{x}{n}\right)^{n}\varphi_{0}=y^{n}e^{n\ln\left(1-\frac{\hat{c}x}{ny}\right)}\varphi_{0}\;.
\end{equation}
By expanding the logarithm up to the second order, we find

\begin{equation}\label{eq:log expan}
\left. L_{n}\left(\frac{x}{n},y\right)\right| _{n\gg1}\simeq y^{n}e^{-\hat{c}\frac{x}{y}-\hat{c}^{2}\frac{1}{2n}\left(\frac{x}{y}\right)^{2}}\varphi_{0}\;.
\end{equation}
The use of the two variable Hermite polynomials \cite{Appel}

\begin{equation}\label{eq:Two var Hermite}
H_{n}\left(x,y\right)=n!\sum_{r=0}^{\lfloor\frac{n}{2}\rfloor}\frac{x^{n-2r}y^{r}}{\left(n-2r\right)!r!}
\end{equation}
with  generating function \cite{Babusci} 

\begin{equation}\label{key}
\sum_{n=0}^{\infty}\frac{t^{n}}{n!}H_{n}\left(x,y\right)=e^{xy+yt^{2}}
\end{equation}
yields the result

\begin{equation}
\left. L_{n}\left(\frac{x}{n},y\right)\right| _{n\gg 1}\simeq y^{n}\sum_{r=0}^{\infty}\frac{\hat{c}^{\,r}}{r!}H_{r}\left(-\frac{x}{y},-\frac{1}{2n}\left(\frac{x}{y}\right)^{2}\right)\varphi_{0}
=y^{n}{}_{H}C_0\left(-\frac{x}{y},-\frac{1}{2n}\left(\frac{x}{y}\right)^{2}\right).
\end{equation}
%where the two variable Hermite polynomials \cite{Appel} are defined as
%
%\begin{equation}\label{eq:Two var Hermite}
%\begin{split}
%& H_{n}\left(x,y\right)=n!\sum_{r=0}^{\lfloor\frac{n}{2}\rfloor}\frac{x^{n-2r}y^{r}}{\left(n-2r\right)!r!}\;,\\
%& \sum_{n=0}^{\infty}\frac{t^{n}}{n!}H_{n}\left(x,y\right)=e^{xy+yt^{2}}
%\end{split}
%\end{equation}
%where ${}_{H}C_n\left(x,y\right)$ are the Hermite based Bessel function \cite{DattoliComp}
%\begin{equation}
%{}_{H}C_n\left(x,y\right)=\sum_{r=0}^{\infty}\frac{H_{r}\left(x,y\right)}{r!\left(n+r\right)!}\label{eq:Hermite based Bessel func}\;.
%\end{equation}
\end{proof}

The level of approximation provided by Eq. \eqref{eq:Laguerre as Herm based Bess} is significantly better than its conventional counterpart given in Eq. \eqref{eq:Laguerre approx Bessel}. We have numerically checked the advantages of the new approximation and we have summarized an example in Tab. \ref{tab1} for  $x=y=1$ and $n=10$.

\begin{table}[h]\caption{\textbf{Level of approximation}}\label{tab1}
	\centering
	{\renewcommand\arraystretch{1.7} 
		\begin{tabular}{||p{0.9in}|p{0.9in}|p{0.9in}||}
			%{||c|c|c|c||}
			\hline   
			 Reference value &  Approximation & Relative error  \\ \hline
			\toprule
		\hspace{.5cm}  Exact value & \hspace{.3cm} $0.2058543$ &\hspace{0.9cm} $-$  \\ \hline
		 \hspace{.5cm} Eq. \eqref{eq:Laguerre approx Bessel} &\hspace{.3cm}  $0.2238908$ &\hspace{.5cm} $8.7\cdot 10^{-2}$ \\ \hline
			\hspace{.5cm} Eq. \eqref{eq:Laguerre as Herm based Bess} &\hspace{.3cm}  $0.2062915$ &\hspace{.5cm} $2.1\cdot 10^{-3}$ \\ \hline
			\bottomrule
	\end{tabular}}
\end{table}
%\begin{tabular}{|l|c|c|}\hline
%	reference value & approximation  & rel. error \\\hline
%	\mbox{}& \mbox{}& \mbox{}\\
%	exact value & 0.2058543 & - \\\hline
%	\mbox{}& \mbox{}& \mbox{}\\
%	eq.(7) & 0.2238908 & $8.7\cdot 10^{-2}$ \\\hline
%	\mbox{}& \mbox{}& \mbox{}\\
%	eq.(10) & 0.2062915 & $2.1\cdot 10^{-3}$  \\\hline
%\end{tabular}
%
%
% the exact value is $0.2058543$, the approximation \eqref{eq:Laguerre approx Bessel} yields $0.2238908$, while Eq. \eqref{eq:Laguerre as Herm based Bess} provides $0.2062915$. 
 
 In the following we will derive even better results and further comment on this aspect of the problem in the concluding
section.\\

We have so far fixed the formalism we will follow in the forthcoming
parts of the paper, in which we will deal with the large index polynomial
expansion of different families of polynomials.

\section{Associated Laguerre and Higher Order Hermite Polynomials}

The main result of the previous section is provided by Eq. \eqref{eq:Laguerre as Herm based Bess} which can be further improved. 

\begin{rem}
For the sake of consistency we note
that the fact that we have considered the expansion at the second order in the argument of the logarithm allows to expand the exponential, in such a way that 

\begin{equation}
\left. L_{n}\left(\frac{x}{n},y\right)\right| _{n\gg 1}\simeq y^{n}e^{-\hat{c}\frac{x}{y}-\hat{c}^{2}\frac{1}{2n}\left(\frac{x}{y}\right)^{2}}\varphi_{0}\simeq y^{n}\left(1-\frac{1}{2n}\left(\frac{x}{y}\right)^{2}\hat{c}^{\,2}\right)e^{-\hat{c}\frac{x}{y}}\varphi_{0}\label{eq:exponential expans}
\end{equation}
which, on account of the previous identities, yields 

\begin{equation}\label{eq:new express}
\left. L_{n}\left(\frac{x}{n},y\right)\right| _{n\gg 1}\simeq y^{n}\left(J_{0}\left(2\sqrt{\frac{x}{y}}\right)-\frac{1}{2n}\left(\frac{x}{y}\right)J_{2}\left(2\sqrt{\frac{x}{y}}\right)\right)\;,
\end{equation}
providing the same order of accuracy of the previous expression involving an Hermite based Bessel.
\end{rem}

As already stressed, we can obtain even better approximations if we further expand the logarithm in Eq. \eqref{eq:identity n.1} thus finding, e.g., an other representation for Eq. \eqref{eq:Laguerre approx Bessel} as it shown below.

\begin{cor}
	$\forall x,y\in\mathbb{R}$, $\forall n\in\mathbb{N}:n\gg1$

\begin{equation}\label{eq:explicit better expans}
\left. L_{n}\left(\frac{x}{n},y\right)\right| _{n\gg 1}\simeq y^{n}{}_{H}C_{0}\left(-\left\{ \frac{1}{s\;n^{s-1}}\left(\frac{x}{y}\right)^{s}\right\} _{s=1}^{m}\right)\;.
\end{equation}
\end{cor}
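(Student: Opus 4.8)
The plan is to mimic exactly the argument used in the Proposition, but carrying the logarithmic expansion in Eq. \eqref{eq:identity n.1} to order $m$ rather than stopping at order two. Starting from
\begin{equation}
\left(y-\hat{c}\frac{x}{n}\right)^{n}\varphi_{0}=y^{n}e^{\,n\ln\left(1-\frac{\hat{c}x}{ny}\right)}\varphi_{0},
\end{equation}
I would first expand the logarithm as $\ln(1-u)=-\sum_{s\ge1}u^{s}/s$ with $u=\hat{c}x/(ny)$, so that
\begin{equation}
n\ln\left(1-\frac{\hat{c}x}{ny}\right)=-\sum_{s=1}^{\infty}\frac{\hat{c}^{\,s}}{s\,n^{s-1}}\left(\frac{x}{y}\right)^{s},
\end{equation}
where the use of Property \ref{eq:oper prop} (namely $(\hat c^{\,\mu})^{s}=\hat c^{\,s\mu}$) is what lets the powers of $\hat c$ be collected cleanly. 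Truncating at $s=m$ for $n\gg1$ gives
\begin{equation}
\left. L_{n}\left(\frac{x}{n},y\right)\right| _{n\gg 1}\simeq y^{n}\exp\!\left(-\sum_{s=1}^{m}\frac{\hat{c}^{\,s}}{s\,n^{s-1}}\left(\frac{x}{y}\right)^{s}\right)\varphi_{0}.
\end{equation}

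The second step is to recognize the exponential of a polynomial in $\hat c$ as a generating function for the higher-order (multi-variable) Hermite polynomials $H_{r}(x_{1},x_{2},\dots,x_{m})$, whose defining generating relation is $\sum_{r\ge0}\frac{t^{r}}{r!}H_{r}(x_{1},\dots,x_{m})=\exp\!\left(\sum_{s=1}^{m}x_{s}t^{s}\right)$; this is the natural $m$-variable extension of the two-variable case quoted before Eq. \eqref{eq:Two var Hermite}. Setting $t=\hat c$ and $x_{s}=-\frac{1}{s\,n^{s-1}}(x/y)^{s}$, I get
\begin{equation}
\left. L_{n}\left(\frac{x}{n},y\right)\right| _{n\gg 1}\simeq y^{n}\sum_{r=0}^{\infty}\frac{\hat{c}^{\,r}}{r!}H_{r}\!\left(-\left\{\frac{1}{s\,n^{s-1}}\left(\frac{x}{y}\right)^{s}\right\}_{s=1}^{m}\right)\varphi_{0}.
\end{equation}
Finally I apply the vacuum rule \eqref{eq:operator c}, $\hat c^{\,r}\varphi_{0}=1/\Gamma(r+1)=1/r!$, which converts the sum into $\sum_{r\ge0}\frac{1}{(r!)^{2}}H_{r}(\cdots)={}_{H}C_{0}(\cdots)$ by the definition \eqref{eq:Hermite based Bessel func} of the Hermite-based Bessel function (with the obvious multi-index reading of its argument), yielding Eq. \eqref{eq:explicit better expans}.

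The only genuinely delicate point is the passage through the generating function of the higher-order Hermite polynomials: one must make sure the multi-variable $H_{r}$ and the multi-index notation ${}_{H}C_{0}(\{x_{s}\}_{s=1}^{m})$ are the ones for which $\exp(\sum_{s=1}^m x_s t^s)=\sum_r \frac{t^r}{r!}H_r(\{x_s\})$ holds, since the paper has only stated the two-variable instance explicitly; everything else is formal manipulation justified by Property \ref{eq:oper prop} and the vacuum prescription \eqref{eq:operator c}. I would also add a sentence noting that the truncation is purely asymptotic (the discarded terms in $n\ln(1-\hat c x/(ny))$ are $O(n^{-m})$ once acted on the vacuum), so that \eqref{eq:explicit better expans} is to be read as an asymptotic improvement of \eqref{eq:Laguerre approx Bessel}, recovering it for $m=1$ and the Proposition for $m=2$.
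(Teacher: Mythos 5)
Your proposal is correct and follows essentially the same route as the paper: an order-$m$ truncation of the logarithm in Eq. \eqref{eq:identity n.1}, followed by the generating function \eqref{eq:m-vr Hermite poly generat func} of the $m$-variable Hermite polynomials with $t=\hat c$ and the vacuum rule \eqref{eq:operator c} to produce ${}_{H}C_{0}$ with the multi-index argument. The details you spell out (setting $t=\hat c$, applying $\hat c^{\,r}\varphi_0=1/r!$, and the multi-index reading of ${}_{H}C_{0}$) are exactly the steps the paper leaves implicit after quoting Eqs. \eqref{eq:better expans}--\eqref{eq:m-vr Hermite poly generat func}.
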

\begin{proof}
	Let
\begin{equation}\label{eq:better expans}
\left. L_{n}\left(\frac{x}{n},y\right)\right| _{n\gg 1}\simeq y^{n}e^{-\sum_{s=1}^m\frac{\hat{c}^{\,s}}{s\;n^{s-1}}\left(\frac{x}{y}\right)^{s}}\varphi_{0}\;.
\end{equation}
The use of the $m$-variable Hermite polynomials, defined as 

\begin{equation}
\label{eq:m-var Hermite poly}
 H_{n}^{(m)}\left(\left\{ x\right\} _{1}^{m}\right)=n!\sum_{r=0}^{\lfloor\frac{n}{m}\rfloor}\frac{x_{m}^{r}H_{n-mr}^{\left(m-1\right)}\left(\left\{ x\right\} _{1}^{m-1}\right)}{\left(n-mr\right)!r!}\;,\qquad \left\{ x\right\} _{1}^{m}=x_{1},\,x_{2},...,\,x_{m}\;,
\end{equation}
with generating function \cite{DattoliComp}

\begin{equation}\label{eq:m-vr Hermite poly generat func}
\sum_{n=0}^\infty\frac{t^{n}}{n!}H_{n}^{\left(m\right)}\left(\left\{ x\right\} _{1}^{m}\right)=e^{\sum_{s=1}^m x_{s}t^{s}}\;,
\end{equation}
provides the Eq. \eqref{eq:explicit better expans}
% from Eq. \eqref{eq:better expans}, 
%
%\begin{equation}
%\left. L_{n}\left(\frac{x}{n},y\right)\right| _{n\gg 1}\simeq y^{n}{}_{H}C_{0}\left(-\left\{ \frac{1}{s\;n^{s-1}}\left(\frac{x}{y}\right)^{s}\right\} _{s=1}^{m}\right)\;.
%\end{equation}
\end{proof}
To give an idea of the level of approximation we note that, for $n=5,x=y=1$, we have Tab. \ref{tab2}

\begin{table}[h]\caption{\textbf{Level of approximation}}\label{tab2}
	\centering
	{\renewcommand\arraystretch{1.7} 
		\begin{tabular}{||p{0.9in}|p{0.9in}|p{0.9in}||}
			%{||c|c|c|c||}
			\hline   
			Reference value &  Approximation & Relative error  \\ \hline
			\toprule
			  Exact value & \hspace{.3cm} $0.1869973$ &\hspace{0.9cm} $-$  \\ \hline
			 Eq. \eqref{eq:Laguerre approx Bessel} &\hspace{.3cm}  $0.2238908$ &\hspace{.5cm} $1.9\cdot 10^{-1}$ \\ \hline
			 Eqs. \eqref{eq:Laguerre as Herm based Bess}, \eqref{eq:new express} &\hspace{.3cm}  $0.1887772$ &\hspace{.5cm} $9.5\cdot 10^{-3}$ \\ \hline
 Eq. \eqref{eq:explicit better expans}, $m=6$ &\hspace{.3cm}  $0.1870019$ &\hspace{.5cm} $2.5\cdot 10^{-5}$ \\ \hline
			\bottomrule
	\end{tabular}}
\end{table}
\noindent while, in Tab. \ref{tab3}, we have levels of approximation for $n=3,y=3,x=1$ which yields, even for a not large index, quite a good approximation. %\textcolor{red}{l'ultima m e' 5 o 6? e da ver l'Approximation}

\begin{table}[h]\caption{\textbf{Level of approximation}}\label{tab3}
	\centering
	{\renewcommand\arraystretch{1.7} 
		\begin{tabular}{||p{0.9in}|p{0.9in}|p{0.9in}||}
			%{||c|c|c|c||}
			\hline   
			Reference value &  Approximation & Relative error  \\ \hline
			\toprule
			Exact value & \hspace{.3cm} $18.4938272$ &\hspace{0.9cm} $-$  \\ \hline
			Eq. \eqref{eq:Laguerre approx Bessel} &\hspace{.3cm}  $18.7227933$ &\hspace{.5cm} $1.2\cdot 10^{-2}$ \\ \hline
			Eqs. \eqref{eq:Laguerre as Herm based Bess}, \eqref{eq:new express} &\hspace{.3cm}  $18.4996194$ &\hspace{.5cm} $3.1\cdot 10^{-4}$ \\ \hline
			Eq. \eqref{eq:explicit better expans}, $m=5$ &\hspace{.3cm}  $0.184938301$ &\hspace{.5cm} $1.6\cdot 10^{-7}$ \\ \hline
			\bottomrule
	\end{tabular}}
\end{table}

Within the context of the umbral formalism, the Associated Laguerre polynomials \cite{Babusci} are expressed as

\begin{equation}\label{eq:associated Laguerre}
L_{n}^{\left(\alpha\right)}\left(x,y\right)=\frac{\Gamma\left(n+\alpha+1\right)}{n!}\hat{c}^{\,\alpha}\left(y-\hat{c}x\right)^{n}\varphi_{0}\;.
\end{equation}
We obtain therefore the further Corollary \ref{C2}.

\begin{cor}\label{C2}
$\forall x,y,\alpha\in\mathbb{R}$, $\forall n\in\mathbb{N}:n\gg1$
\begin{equation}\label{eq:eval associated Laguerre}
\left.  \frac{n!}{\Gamma\left(n+\alpha+1\right)}L_{n}^{\left(\alpha\right)}\left(x,y\right)\right| _{n\gg 1}\simeq y^{n}\hat{c}^{\,\alpha}e^{-\sum_{s=1}^m \frac{\hat{c}^{\,s}}{s\;n^{s-1}}\left(\frac{x}{y}\right)^{s}}\varphi_{0}=y^{n}{}_{H}C_{\alpha}\left(-\left\{  \frac{1}{sn^{s-1}}\left(\frac{x}{y}\right)^{s}\right\} _{s=1}^{m}\right)
\end{equation}
which at the lowest order $(m=1)$ yields the well-known expression 

\begin{equation}
\left. \frac{n!}{\Gamma\left(n+\alpha+1\right)}L_{n}^{\left(\alpha\right)}\left(x,y\right)\right| _{n\gg 1}\simeq y^{n}C_{\alpha}\left(\frac{x}{y}\right)=y^{n}\left(2\frac{x}{y}\right)^{\alpha}J_{\alpha}\left(2\sqrt{\frac{x}{y}}\right).
\label{eq:at lowest order}
\end{equation}
\end{cor}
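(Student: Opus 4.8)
Following the same steps as in the proof of the previous Corollary, I only need to carry the extra umbral factor $\hat{c}^{\,\alpha}$ along unchanged. First I would use the umbral representation \eqref{eq:associated Laguerre} with the rescaling $x\mapsto x/n$ adopted throughout this section, so that
\begin{equation}
\frac{n!}{\Gamma(n+\alpha+1)}\,L_{n}^{(\alpha)}\!\left(\tfrac{x}{n},y\right)=\hat{c}^{\,\alpha}\left(y-\hat{c}\tfrac{x}{n}\right)^{n}\varphi_{0}=y^{n}\,\hat{c}^{\,\alpha}\,e^{\,n\ln\left(1-\hat{c}\frac{x}{ny}\right)}\varphi_{0}\;.
\end{equation}
This rearrangement is legitimate because, by Properties \ref{eq:oper prop}, all powers of $\hat{c}$ commute with one another, so the operator exponential and the prefactor $\hat{c}^{\,\alpha}$ may be multiplied freely, and the vacuum $\varphi_{0}$ is evaluated only at the very end.

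Next I would expand the logarithm exactly as in \eqref{eq:identity n.1}--\eqref{eq:better expans}, using $n\ln\!\left(1-\hat{c}\tfrac{x}{ny}\right)=-\sum_{s\ge 1}\tfrac{\hat{c}^{\,s}}{s\,n^{s-1}}\left(\tfrac{x}{y}\right)^{s}$ and truncating at order $m$, the discarded terms being $O(n^{-m})$ exactly as in the scalar case (the presence of $\hat{c}^{\,\alpha}$ changes neither this estimate nor any of the manipulations). I would then invoke the generating function \eqref{eq:m-vr Hermite poly generat func} of the $m$-variable Hermite polynomials with $t$ replaced by $\hat{c}$ and $x_{s}=-\tfrac{1}{s\,n^{s-1}}\left(\tfrac{x}{y}\right)^{s}$, which turns the truncated exponential into $\sum_{r\ge 0}\tfrac{\hat{c}^{\,r}}{r!}\,H_{r}^{(m)}\!\left(\{x\}_{1}^{m}\right)$. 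Applying $\hat{c}^{\,\alpha}$ and then the umbral prescription \eqref{eq:operator c} in the form $\hat{c}^{\,\alpha+r}\varphi_{0}=1/\Gamma(\alpha+r+1)$ yields
\begin{equation}
\left.\frac{n!}{\Gamma(n+\alpha+1)}\,L_{n}^{(\alpha)}\!\left(\tfrac{x}{n},y\right)\right|_{n\gg 1}\simeq y^{n}\sum_{r=0}^{\infty}\frac{H_{r}^{(m)}\!\left(\{x\}_{1}^{m}\right)}{r!\,\Gamma(\alpha+r+1)}=y^{n}\,{}_{H}C_{\alpha}\!\left(-\left\{\tfrac{1}{s\,n^{s-1}}\left(\tfrac{x}{y}\right)^{s}\right\}_{s=1}^{m}\right)\;,
\end{equation}
which is \eqref{eq:eval associated Laguerre} once Eq. \eqref{eq:Hermite based Bessel func} is read with the factorial $(n+r)!$ replaced by $\Gamma(\alpha+r+1)$, the natural extension to real index dictated by the umbral calculus.

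For the lowest-order case $m=1$ the one-variable Hermite polynomials reduce to monomials, $H_{r}^{(1)}(x_{1})=x_{1}^{r}$, so ${}_{H}C_{\alpha}$ collapses to the Tricomi--Bessel function $C_{\alpha}(x/y)=\sum_{r\ge 0}\tfrac{(-x/y)^{r}}{r!\,\Gamma(\alpha+r+1)}$, and the standard link between $C_{\alpha}$ and the cylindrical Bessel function $J_{\alpha}$ then gives \eqref{eq:at lowest order}. The only genuinely new ingredient with respect to the previous Corollary is the bookkeeping of the non-integer index $\alpha$: one must check that commuting $\hat{c}^{\,\alpha}$ through the exponential and evaluating $\hat{c}^{\,\alpha+r}\varphi_{0}$ remain consistent with the Definition and Properties \ref{eq:oper prop}, which is immediate since those rules hold for arbitrary real exponents. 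I expect this to be the main, and only mild, obstacle; everything else is a verbatim repetition of the scalar argument.
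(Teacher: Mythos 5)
Your proposal is correct and follows essentially the same route the paper intends: it applies the umbral representation \eqref{eq:associated Laguerre}, carries the commuting factor $\hat{c}^{\,\alpha}$ through the logarithmic expansion of the previous Corollary, and evaluates $\hat{c}^{\,\alpha+r}\varphi_{0}=1/\Gamma(\alpha+r+1)$ to produce ${}_{H}C_{\alpha}$, with the Tricomi function $C_{\alpha}$ emerging at $m=1$. You also correctly (if tacitly) read the left-hand side with the rescaled argument $x/n$, which is what the $1/(s\,n^{s-1})$ factors in \eqref{eq:eval associated Laguerre} require.
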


In this section we have derived, as unexpected feature, the link between large index Laguerre, higher order Hermite and non-standard multivariable Bessel functions. In the forthcoming section we will show how, the umbral formalism we have sketched so far allows a strictly analogous treatment for the Hermite polynomial theory and provides a closely similar procedures for the derivation of the relevant large index expansion.

\section{Large Index Expansion of Hermite Polynomials}

In the following we will use the two variable Hermite introduced in Eq. \eqref{eq:Two var Hermite} and note that they satisfy the scaling relation \cite{Babusci} 

\begin{equation}\label{eq:scaling rel}
a^{n}H_{n}\left(x,y\right)=H_{n}\left(ax,a^{2}y\right)\;.
\end{equation}
It is accordingly evident that 

\begin{equation}\label{eq:acc. evident}
\frac{y^{\frac{n}{2}}}{n^{n}}H_{n}\left(\frac{nx}{\sqrt{y}},1\right)=H_{n}\left(x,\frac{y}{n^{2}}\right)\;.
\end{equation}
The umbral form of Hermite polynomials is provided by \cite{G.Dattoli,SLicciardi}\footnote{The vacuum and the umbral operator are in this case defined as \begin{equation*}\label{key}
	 \hat{h}_{y}=e^{\sqrt{y}\partial_{\xi}},\qquad  \quad \phi_{0}=\frac{\Gamma\left(\xi+1\right)}{\Gamma\left(\frac{\xi}{2}+1\right)}\left|\cos\left(\xi\frac{\pi}{2}\right)\right|.
\end{equation*}} 

\begin{equation}\label{eq:umbral of Herm poly}
H_{n}\left(x,y\right)= \left(x+\hat{h}_{y}\right)^{n}\phi_{0}\;,
\end{equation}
where the umbral operator $\hat{h}$ acts on the vacuum $\phi_0$ providing
 
\begin{equation}
\hat{h}_{y}^{\,r}\phi_{0}=\frac{y^{\frac{r}{2}}r!}{\Gamma\left(\frac{r}{2}+1\right)}\left|\cos\left(r\frac{\pi}{2}\right)\right|\label{eq:oper definit}\;.
\end{equation}
It is worth noting that \cite{SLicciardi} 

\begin{equation}\label{eq:another identity}
e^{\hat{h}_{y}z}\phi_{0}=\sum_{r=0}^\infty\frac{\left(\hat{h}_{y}z\right)^{r}}{r!}\phi_{0}=e^{yz^{2}}.
\end{equation}

It is now fairly natural to follow the same steps leading to the Laguerre asymptotic forms.

\begin{prop}
 We set, $\forall x,y\in\mathbb{R}$, $\forall n\in\mathbb{N}:n\gg1$

\begin{equation}\label{eq:asymptotic form}
\left. H_{n}\left(x,\frac{y}{n^{2}}\right)\right| _{n\gg 1}=\left. x^{n}\left(1+\frac{\hat{h}_{y}}{xn}\right)^{n}\right| _{n\gg1}\phi_{0}
\simeq x^{n}e^{\frac{\hat{h}_{y}}{x}}\phi_{0}=x^{n}e^{\frac{y}{x^{2}}}\;.
\end{equation}
The expansion at the second order yields

\begin{equation}\label{eq:second order expansion}
\left. H_{n}\left(x,\frac{y}{n^{2}}\right)\right| _{n\gg 1}\simeq
 x^{n}e^{\frac{\hat{h}_{y}}{x}\frac{\hat{h}_{y}^{\,2}}{2x^{2}n}}\phi_{0}=
 x^{n}\sum_{r=0}^\infty\frac{1}{r!}H_{2r}\left(\frac{\sqrt{y}}{x},-\frac{y}{2nx^{2}}\right)\;.
\end{equation}
\end{prop}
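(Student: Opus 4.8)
The plan is to run, for the Hermite umbral pair $(\hat h_{y},\phi_{0})$, exactly the argument that produced \eqref{eq:Laguerre approx Bessel}--\eqref{eq:Laguerre as Herm based Bess} for the Laguerre family, with the Bessel vacuum rule replaced by the Gaussian one \eqref{eq:another identity}.

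First I would record the scaling of the operator on the vacuum: from \eqref{eq:oper definit}, $\hat h_{y/n^{2}}^{\,r}\phi_{0}=n^{-r}\hat h_{y}^{\,r}\phi_{0}$ for every $r$, so that, acting on $\phi_{0}$, one may replace $\hat h_{y/n^{2}}$ by $n^{-1}\hat h_{y}$. Feeding this into the umbral form \eqref{eq:umbral of Herm poly} gives $H_{n}(x,y/n^{2})=(x+\hat h_{y/n^{2}})^{n}\phi_{0}=x^{n}(1+\hat h_{y}/(xn))^{n}\phi_{0}$, the middle member of \eqref{eq:asymptotic form}. Writing $(1+\hat h_{y}/(xn))^{n}=\exp(n\ln(1+\hat h_{y}/(xn)))$ and expanding the logarithm, the linear term alone gives $x^{n}e^{\hat h_{y}/x}\phi_{0}$ for $n\gg1$; then \eqref{eq:another identity} with $z=1/x$ converts $e^{\hat h_{y}/x}\phi_{0}$ into $e^{y/x^{2}}$, which is \eqref{eq:asymptotic form}.

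For \eqref{eq:second order expansion} I would keep the quadratic term, $n\ln(1+\hat h_{y}/(xn))=\hat h_{y}/x-\hat h_{y}^{2}/(2nx^{2})+O(n^{-2})$, so that $H_{n}(x,y/n^{2})|_{n\gg1}\simeq x^{n}e^{\hat h_{y}/x-\hat h_{y}^{2}/(2nx^{2})}\phi_{0}$. Next I would invoke the two-variable Hermite generating function with the scalar $t$ formally replaced by the operator $\hat h_{y}$ and with parameters $1/x$ and $-1/(2nx^{2})$, rewriting the exponential as $\sum_{r\ge0}\frac{\hat h_{y}^{\,r}}{r!}H_{r}(1/x,-1/(2nx^{2}))$. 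Applying $\phi_{0}$ and using \eqref{eq:oper definit}, the factor $|\cos(r\pi/2)|$ annihilates the odd-$r$ terms and leaves $\hat h_{y}^{\,2r}\phi_{0}=y^{r}(2r)!/r!$ on the even ones, so after reindexing $r\to 2r$ one obtains $x^{n}\sum_{r\ge0}\frac{y^{r}}{r!}H_{2r}(1/x,-1/(2nx^{2}))$. Finally the scaling relation \eqref{eq:scaling rel}, in the guise $y^{r}H_{2r}(a,b)=(\sqrt y)^{2r}H_{2r}(a,b)=H_{2r}(\sqrt y\,a,\,yb)$, absorbs $y^{r}$ into the arguments and yields precisely $x^{n}\sum_{r\ge0}\frac1{r!}H_{2r}(\sqrt y/x,-y/(2nx^{2}))$, as claimed.

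The only step that is not a purely formal rearrangement is the truncation of $n\ln(1+\hat h_{y}/(xn))$: this is an asymptotic statement about an operator, so one should check that the discarded $O(n^{-2})$ operator contributions, once hit by $\phi_{0}$ and resummed, are genuinely of lower order in the regime of interest — the same point left implicit in the Laguerre derivation at \eqref{eq:log expan}. Everything else (substituting $\hat h_{y}$ for $t$ in the generating function, the termwise action of $\phi_{0}$, the vanishing of the odd terms, and the reindexing of the even ones) is routine once the umbral rules \eqref{eq:oper definit}--\eqref{eq:another identity} and the scaling \eqref{eq:scaling rel} are granted.
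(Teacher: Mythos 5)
Your proposal is correct and is essentially the paper's own (largely implicit) argument: the paper offers no separate proof, saying only that one follows ``the same steps leading to the Laguerre asymptotic forms'', and your route --- scaling $\hat{h}^{\,r}_{y/n^{2}}\phi_{0}=n^{-r}\hat{h}_{y}^{\,r}\phi_{0}$, log-expansion of the binomial, substitution of $\hat{h}_{y}$ for $t$ in the two-variable Hermite generating function, suppression of odd terms by $\left|\cos\left(r\frac{\pi}{2}\right)\right|$, and absorption of $y^{r}$ via \eqref{eq:scaling rel} --- is exactly that. You also correctly read the exponent in \eqref{eq:second order expansion} as $\frac{\hat{h}_{y}}{x}-\frac{\hat{h}_{y}^{\,2}}{2x^{2}n}$, i.e.\ with the minus sign that the displayed formula evidently omits.
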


The approximation given by Eq. \eqref{eq:second order expansion} is easily checked to be significantly better than the corresponding lower order case, as seen from the numerical examples reported in Tab. \ref{tab4}  for $x=1,y=3,n=70$.

\begin{table}[h]\caption{\textbf{Level of approximation}}\label{tab4}
	\centering
	{\renewcommand\arraystretch{1.7} 
		\begin{tabular}{||p{0.9in}|p{0.9in}|p{0.9in}||}
			%{||c|c|c|c||}
			\hline   
			Reference value &  Approximation & Relative error  \\ \hline
			\toprule
		\hspace{.3cm} 	Exact value & \hspace{.6cm} $15.465$ &\hspace{0.9cm} $-$  \\ \hline
		\hspace{.3cm} 	Eq. \eqref{eq:asymptotic form} &\hspace{.6cm}  $20.086$ &\hspace{.5cm} $2.3\cdot 10^{-1}$ \\ \hline
		\hspace{.3cm} 	Eq. \eqref{eq:second order expansion} &\hspace{.6cm}  $15.211$ &\hspace{.5cm} $1.6\cdot 10^{-2}$ \\ \hline
			\bottomrule
	\end{tabular}}
\end{table}
The inclusion of higher order corrections follows the same steps as in the case of Laguerre polynomials and indeed, for $x=y=3$, we find for example Tab. \ref{tab5}.\\

\begin{table}[h]\caption{\textbf{Level of approximation}}\label{tab5}
	\centering
	{\renewcommand\arraystretch{1.7} 
		\begin{tabular}{||p{0.9in}|p{0.9in}|p{0.9in}||}
			%{||c|c|c|c||}
			\hline   
			Reference value &  Approximation & Relative error  \\ \hline
			\toprule
			Exact value &  $7.84727363\cdot 10^4$ &\hspace{0.9cm} $-$  \\ \hline
			Eq. \eqref{eq:asymptotic form} & $7.81475219\cdot 10^4$ &\hspace{.5cm} $4.1\cdot 10^{-3}$ \\ \hline
			Eq. \eqref{eq:second order expansion}, $m=3$ &  $7.85216889\cdot 10^4$ &\hspace{.5cm} $6.2\cdot 10^{-4}$ \\ \hline
			Eq. \eqref{eq:second order expansion}, $m=4$ & $7.84655402\cdot 10^4$ &\hspace{.5cm} $9.1\cdot 10^{-5}$ \\ \hline
			\bottomrule
	\end{tabular}}
\end{table}

In this section we have shown that the umbral technique offers a very efficient tool for the evaluation of the asymptotic series for Hermite polynomials in complete analogy with the Laguerre case.\\

The forthcoming section is devoted to further examples involving mixed polynomial families.

\section{Final Examples and Concluding Comments}

The second order expansion in Eq. \eqref{eq:second order expansion} is amenable for a simplification yielding the second order asymptotic expansion in terms of a Gauss function. \\

We remind indeed the generating function involving even index of Hermite polynomials \cite{Motzkin}

\begin{equation}\label{eq:even index gen func Herm}
\sum_{n=0}^\infty\frac{t^{n}}{n!}H_{2n}\left(x,y\right)=\frac{1}{\sqrt{1-4yt}}e^{\frac{tx^{2}}{1-4yt}}
\end{equation}
which yields for Eq. \eqref{eq:second order expansion} the following identity 

\begin{equation}\label{eq:following identity}
\left. H_{n}\left(x,\frac{y}{n^{2}}\right)\right| _{n\gg 1}\simeq\frac{\sqrt{n}x^{n+1}}{\sqrt{nx^{2}+2y}}\,e^{\frac{ny}{nx^{2}+2y}}\;.
\end{equation}

We close this note by adding a few comments to the extension of the method to other family of polynomials as for example the hybrid Laguerre-Hermite (h-LH), which have been exploited in the past to deal with the so called Motzkin numbers and with their generalization \cite{Blasiak}.\\

The h-LH are polynomials in between Laguerre and Hermite, hence the name, they are indeed defined as

\begin{equation}\label{eq:h-LH poly def}
HL_{n}\left(x,y\right)=n!\sum_{r=0}^{\lfloor\frac{n}{2}\rfloor}\frac{x^{n-2r}y^{r}}{\left(n-2r\right)!r!}=H_{n}\left(x,\,\hat{c}y\right)\varphi_{0}=\left(x+\sqrt{\hat{c}}\;\hat{h}_{y}\right)^{n}\phi_{0}\;\varphi_{0}\;,
\end{equation}
where the operators $\hat{c},\,\hat{h}$ act separately on the vacua $\phi_{0},\,\varphi_{0}$ respectively. \\

\noindent The use of the same procedure as before yields the equation below. 

\begin{prop}
$\forall x,y\in\mathbb{R}$, $\forall n\in\mathbb{N}:n\gg1$
\begin{equation}\label{eq:h LH poly eval}
 \left. HL_{n}\left(x,\frac{y}{n^{2}}\right)\right| _{n\gg1 }\simeq x^{n}e^{\frac{\sqrt{\hat{c}}\hat{h}_{y}}{x}}\phi_{0}\varphi_{0}=x^{n}e^{\hat{c}\frac{y}{x^{2}}}\varphi_{0}=x^{n}I_{0}\left(2\frac{\sqrt{y}}{x}\right),
\end{equation}
where 

\begin{equation}\label{key}
I_{0}\left(x\right)=\sum_{r=0}^\infty\frac{\left(\frac{x}{2}\right)^{n+2r}}{r!\left(n+r\right)!}
\end{equation}
 is the $0$-order modified Bessel function of the first kind.
\end{prop}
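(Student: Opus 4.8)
The plan is to repeat, almost verbatim, the argument that produced the Hermite asymptotics in Eq.~\eqref{eq:asymptotic form}, now with the hybrid umbral operator $\sqrt{\hat c}\,\hat h_y$ in place of $\hat h_y$, and then to collapse the two vacua $\phi_0$ and $\varphi_0$ one after the other.

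First I would start from the umbral representation in Eq.~\eqref{eq:h-LH poly def} evaluated at the rescaled second argument, writing $HL_n(x,y/n^2)=(x+\sqrt{\hat c}\,\hat h_{y/n^2})^n\phi_0\varphi_0$. Using the action \eqref{eq:oper definit} one reads off the scaling $\hat h_{y/n^2}^{\,r}\phi_0=n^{-r}\hat h_y^{\,r}\phi_0$, so that $\hat h_{y/n^2}$ may be replaced by $n^{-1}\hat h_y$ against the vacuum. Factoring $x^n$ out gives $x^n\bigl(1+\tfrac{\sqrt{\hat c}\,\hat h_y}{xn}\bigr)^n\phi_0\varphi_0$, and the large-index step $(1+a/n)^n\simeq e^a$ yields the leading form $x^n e^{\sqrt{\hat c}\,\hat h_y/x}\phi_0\varphi_0$, which is the first equality in \eqref{eq:h LH poly eval}.

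Next I would dispose of the Hermite vacuum. Since $\hat c$ acts only on $\varphi_0$ and $\hat h_y$ only on $\phi_0$, the two commute and $\sqrt{\hat c}$ may be treated as a scalar when applying the Gaussian identity \eqref{eq:another identity}; taking $z=\sqrt{\hat c}/x$ there gives $e^{\sqrt{\hat c}\,\hat h_y/x}\phi_0=e^{y(\sqrt{\hat c}/x)^2}=e^{\hat c\,y/x^2}$, the second equality. Finally, expanding $e^{\hat c\,y/x^2}\varphi_0=\sum_{k\ge 0}\tfrac{1}{k!}(y/x^2)^k\,\hat c^{\,k}\varphi_0$ and inserting $\hat c^{\,k}\varphi_0=1/k!$ from \eqref{eq:operator c} produces $\sum_{k\ge 0}\tfrac{(y/x^2)^k}{(k!)^2}$, which is precisely $I_0(2\sqrt{y}/x)$.

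The only point needing care is, as throughout the paper, the asymptotic replacement $(1+a/n)^n\simeq e^a$ performed inside a formal umbral series: it is to be understood at the level of the $\simeq$ symbol rather than as a genuine operator limit. One should also keep in mind that the manipulation relies on the two umbral operators living on separate vacua, so that pushing $\sqrt{\hat c}$ through the identity \eqref{eq:another identity} as if it were a constant is legitimate; apart from these observations the derivation is a routine rearrangement of convergent series.
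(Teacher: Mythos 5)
Your derivation is correct and is exactly the route the paper intends: the paper gives no explicit proof, simply invoking ``the same procedure as before,'' and your steps (rescaling $\hat h_{y/n^2}^{\,r}\phi_0=n^{-r}\hat h_y^{\,r}\phi_0$, the $(1+a/n)^n\simeq e^a$ step, the Gaussian identity with $z=\sqrt{\hat c}/x$, and the final series collapse $\hat c^{\,k}\varphi_0=1/k!$ giving $I_0$) are precisely that procedure carried over from the Hermite case.
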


\begin{cor}
The higher order approximation leads to expressions umbrally equivalent to those derived in the previous sections thus finding, e.g., 
%\textcolor{red}{verif exp}

\begin{equation}\label{eq:HL high order approx}
\left. HL_{n}\left(x,\frac{y}{n^{2}}\right)\right| _{n\gg 1}\simeq x^{n}e^{\frac{\sqrt{\hat{c}}\hat{h}_{y}}{x}-\frac{\hat{c}\hat{h}_{y}^{2}}{2x^{2}n}}\;\phi_{0}=x^{n}\sum_{r=0}^\infty \frac{y^{r}}{\left(r!\right)^{2}}H_{2r}\left(\frac{1}{x},-\frac{1}{2nx^{2}}\right)\;.
\end{equation}
\end{cor}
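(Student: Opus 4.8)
The plan is to mimic, almost verbatim, the higher-order expansion already carried out for the two-variable Hermite polynomials in Eq.~\eqref{eq:second order expansion}, the only new ingredient being the bookkeeping of the extra umbral operator $\hat{c}$ that distinguishes the hybrid Laguerre--Hermite family from the purely Hermite one. Starting from the operational representation in Eq.~\eqref{eq:h-LH poly def}, I would write
\begin{equation}
\left. HL_{n}\left(x,\frac{y}{n^{2}}\right)\right| _{n\gg1}
=\left. x^{n}\left(1+\frac{\sqrt{\hat{c}}\,\hat{h}_{y}}{xn}\right)^{n}\right| _{n\gg1}\phi_{0}\varphi_{0}
=x^{n}e^{\,n\ln\left(1+\frac{\sqrt{\hat{c}}\,\hat{h}_{y}}{xn}\right)}\phi_{0}\varphi_{0}\;,
\end{equation}
exactly in parallel with Eq.~\eqref{eq:identity n.1}. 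Expanding the logarithm and retaining terms through second order in $1/n$ produces the exponent $\dfrac{\sqrt{\hat{c}}\,\hat{h}_{y}}{x}-\dfrac{\hat{c}\,\hat{h}_{y}^{2}}{2x^{2}n}$, which is the argument displayed in Eq.~\eqref{eq:HL high order approx}; here one uses $(\sqrt{\hat{c}})^{2}=\hat{c}$ from Properties~\ref{eq:oper prop}.

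The second step is to disentangle the two umbral actions. Since $\hat{c}$ and $\hat{h}_{y}$ act on the independent vacua $\varphi_{0}$ and $\phi_{0}$ respectively, I can first apply the Hermite vacuum. The exponent has the shape $\hat{h}_{y}A+\hat{h}_{y}^{2}B$ with $A=\sqrt{\hat{c}}/x$ and $B=-\hat{c}/(2x^{2}n)$ treated as $\hat{h}_{y}$-independent coefficients; the two-variable Hermite generating function $\sum_{r}\frac{t^{r}}{r!}H_{r}(u,v)=e^{ut+vt^{2}}$ from the excerpt then gives
\begin{equation}
e^{\frac{\sqrt{\hat{c}}\,\hat{h}_{y}}{x}-\frac{\hat{c}\,\hat{h}_{y}^{2}}{2x^{2}n}}\phi_{0}
=\sum_{r=0}^{\infty}\frac{1}{r!}\,\hat{h}_{y}^{\,r}\,H_{r}\!\left(\frac{\sqrt{\hat{c}}}{x},-\frac{\hat{c}}{2x^{2}n}\right)\phi_{0}\;,
\end{equation}
and, because $\hat{h}_{y}^{\,r}\phi_{0}$ vanishes for odd $r$ by Eq.~\eqref{eq:oper definit}, only even $r=2k$ survive. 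Using $\hat{h}_{y}^{\,2k}\phi_{0}=y^{k}(2k)!/k!$ together with the scaling relation~\eqref{eq:scaling rel} to rescale the Hermite arguments, the sum collapses to $\sum_{k}\frac{y^{k}}{(k!)^{2}}H_{2k}\!\left(\frac{\sqrt{\hat c}}{x},-\frac{\hat c}{2 x^{2} n}\right)$ up to the appropriate power of $\hat c$; pulling the surviving $\hat c$-powers through and applying $\hat c^{\mu}\varphi_0=1/\Gamma(\mu+1)$ reorganizes everything into $\sum_{k}\frac{y^{k}}{(k!)^{2}}H_{2k}\!\left(\frac{1}{x},-\frac{1}{2nx^{2}}\right)$, which is precisely the right-hand side of Eq.~\eqref{eq:HL high order approx}.

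The main obstacle I anticipate is the careful tracking of the fractional powers of $\hat{c}$: each $H_{2k}\!\left(\frac{\sqrt{\hat c}}{x},-\frac{\hat c}{2x^{2}n}\right)$ is a polynomial whose monomials carry powers $\hat{c}^{k-j}\cdot\hat{c}^{j}=\hat{c}^{k}$ in a way that must combine consistently so that, after acting on $\varphi_{0}$, the integer-power Hermite structure $H_{2k}\!\left(\frac1x,-\frac{1}{2nx^2}\right)$ re-emerges with the clean weight $y^{k}/(k!)^{2}$ rather than a messier $\Gamma$-function expression; verifying this homogeneity and the cancellation of all half-integer shifts is the delicate bookkeeping. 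I would also state explicitly that the approximation is understood in the same $n\gg1$ asymptotic sense as the earlier propositions, so that truncating the logarithm at second order is the only source of error, and note that retaining $m$ terms of the logarithm would, exactly as in Eq.~\eqref{eq:explicit better expans}, replace the two Hermite arguments by an $m$-variable Hermite ${}_{H}C_{0}$-type object — but for this corollary the second-order truncation suffices.
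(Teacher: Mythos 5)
Your proposal is correct and follows exactly the route the paper intends (and uses for the analogous earlier propositions): second-order expansion of $n\ln\left(1+\frac{\sqrt{\hat{c}}\,\hat{h}_{y}}{xn}\right)$, the two-variable Hermite generating function to resolve the exponential in powers of $\hat{h}_{y}$, survival of even powers only with $\hat{h}_{y}^{\,2k}\phi_{0}=y^{k}(2k)!/k!$, extraction of $\hat{c}^{\,k}$ via the scaling relation, and $\hat{c}^{\,k}\varphi_{0}=1/k!$ supplying the second factor of $1/k!$. The only blemish is presentational: the intermediate sum should read $\sum_{k}\frac{y^{k}}{k!}H_{2k}\left(\frac{\sqrt{\hat{c}}}{x},-\frac{\hat{c}}{2x^{2}n}\right)$, with the full weight $\frac{y^{k}}{(k!)^{2}}$ appearing only after the $\hat{c}^{\,k}$ acts on $\varphi_{0}$, but your mechanism and final result are right.
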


This note has provided a description of a use of umbral/operational methods to define a strategy for the study of the asymptotic properties of special polynomials.\\

 In a future investigation we will extend the procedure to the case of special functions.\\
 
\textbf{Acknowledgements}\\

The work of Dr. S. Licciardi was supported by an Enea Research Center individual fellowship.\\

\textbf{Author Contributions}\\

Conceptualization: G.D.; methodology: G.D., S.L.; data curation: S.L., R.M.P.; validation: G.D., S.L., E.S., R.M.P.; formal analysis: G.D., S.L.; writing - original draft preparation: G.D., S.L.; writing - review and editing: S.L. .\\

\textbf{References}

\end{document}